\numberwithin{equation}{section}
\numberwithin{figure}{section}
\numberwithin{table}{section}
\newtheorem{theorem}{Theorem}[section]
\newtheorem{proposition}[theorem]{Proposition}
\newtheorem{lemma}[theorem]{Lemma}
\theoremstyle{definition} %
\newtheorem{remark}[theorem]{Remark}
\newtheorem{definition}[theorem]{Definition}
\newcommand\R{\mathbb R}
\newcommand\FD{{\mathcal A}}
\newcommand\odd{{C^\infty_{\rm odd}(S^2,\R)}}
\newcommand\good{{amply negative }}
\newcommand\Good{{Amply negative }}
\newcommand\Gv{{\Sigma}}
\newcommand\Guil{\Psi^{f}}
\newcommand\Se{X}
\newcommand\ppi{P}
\title{A Zoll counterexample to a geodesic length conjecture}
\author[F.~Balacheff]{Florent Balacheff$^1$} \address{Section de
Math\'ematiques, Universit\'e de Gen\`eve, Suisse}
\email{florent.balacheff@math.unige.ch} \thanks{$^1$Supported by the
Swiss National Science Foundation}
\author[C.~Croke]{Christopher Croke$^2$} \address{ Department of
Mathematics, University of Pennsylvania, Philadelphia, PA 19104-6395
USA} \email{ccroke@math.upenn.edu} \thanks{$^2$Supported by NSF grants
DMS 02-02536 and DMS 07-04145}
\author[M.~Katz]{Mikhail~G.~Katz$^{3}$} %
\thanks{$^{3}$Supported by the Israel Science Foundation (grants 84/03
and 1294/06)} \address{Mikhail G. Katz, Department of Mathematics, Bar
Ilan University, Ramat Gan 52900 Israel}
\email{katzmik ``at'' math.biu.ac.il}
\begin{document}

\subjclass{
53C23, 
53C22
}

\keywords{Closed geodesic, diameter, Guillemin deformation, sphere,
systole, Zoll surface}

\date{\today}

\begin{abstract}
We construct a counterexample to a conjectured inequality~$L\leq 2D$,
relating the diameter~$D$ and the least length~$L$ of a nontrivial
closed geodesic, for a Riemannian metric on the~$2$-sphere.  The
construction relies on Guillemin's theorem concerning the existence of
Zoll surfaces integrating an arbitrary infinitesimal odd deformation
of the round metric.  Thus the round metric is not optimal for the
ratio~$L/D$.
\end{abstract}

\maketitle
\tableofcontents

\section{Zoll surfaces and Guillemin deformation}

Given a Riemannian metric on the~$2$-sphere, we consider its
diameter~$D$ and the length~$L$ of its shortest nontrivial closed
geodesic.  The first inequality relating the two invariants was
obtained by the second-mentioned author \cite{Cr}, who proved the
bound~$L \leq 9 D$.  The constant in the inequality was successively
improved by M.~Maeda \cite{Ma94}, A.~Nabutovsky and R.~Rotman
\cite{NR02}, and S. Sabourau \cite{Sa0}.  The best known bound is~$L
\leq 4 D$.  Nabutovsky and Rotman conjectured the inequality~$L \leq 2
D$ \cite[Introduction]{NR02}, meaning that the round metric of~$S^2$
is optimal for the relationship between these two invariants.  We give
a few examples of surfaces satisfying the case of equality~$L=2D$:

\begin{enumerate}
\item
a surface of revolution in~$\R^3$ obtained from an ellipse with major
axis on the~$x$-axis;
\item
a circular ``pillow'', obtained by doubling the flat unit disk;
\item
a more general pillow obtained by doubling the region enclosed by a
closed curve of constant width in the plane;
\item
rotationally invariant Zoll surfaces.
\end{enumerate}

The existence of such diverse examples may have led one to expect that
none of these metrics are optimal for the ratio~$L/D$.

It turns out that a counterexample to the inequality~$L\leq 2D$ may be
found among Zoll surfaces, namely surfaces all of whose geodesics are
closed, and whose prime geodesics all have equal length~$2\pi$.  More
precisely, while the rotationally symmetric Zoll surfaces do satisfy
(the boundary case of equality of) the conjectured inequality, there
exist other families of Zoll surfaces such that~$L>2D$.  Such surfaces
can be obtained as smooth variations of the round metric.

Let~$(S^2,g_0)$ be the~$2$-sphere endowed with the round metric.
Denote by~$a: S^2 \to S^2$ its antipodal map.  Let~$\odd$ be the space
of smooth odd functions on~$S^2$, i.e.~smooth real valued
functions~$f$ satisfying~$f \circ a = -f$.  The following existence
theorem for Zoll surfaces is due to V. Guillemin \cite{Gu76}.

\begin{theorem}[Guillemin]
\label{Gu}
For every~$f \in \odd$, there exists a smooth one-parameter
family~$g_t=\Guil_t g_0$ of smooth Zoll metrics such that~$\Guil_0=1$,
the conformal factor~$\Guil_t$ satisfies~$(d \Guil_t /dt) |_{t=0}
^{\phantom{I}} =f$, and all prime periodic geodesics of $(S^2, g_t)$
have length $2\pi$.
\end{theorem}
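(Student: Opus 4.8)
The plan is to realise the Zoll conformal metrics near $g_0$ as the zero set of a nonlinear ``Zoll defect'' map whose linearisation at $g_0$ is a multiple of the classical Funk--Radon transform on $S^2$, to identify the kernel of that transform with $\odd$, and then to integrate an arbitrary infinitesimal odd deformation by an implicit function argument of Nash--Moser type. Recall that the geodesics of $(S^2,g_0)$ are the great circles, all of length $2\pi$; the geodesic flow on the unit tangent bundle is $2\pi$-periodic, and the quotient $C$ (the space of oriented great circles) is a symplectic surface diffeomorphic to $S^2$, carrying the orientation-reversing involution $\gamma\mapsto\bar\gamma$ with quotient $\mathbb{RP}^2$.

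First I would define the defect. For a positive function $\Psi$ that is $C^2$-close to $1$, the geodesic flow of $g=\Psi g_0$ is a small perturbation of a periodic flow, so by the Weinstein--Guillemin averaging procedure (symplectic reduction together with averaging over the residual $S^1$-action) one attaches to $g$ a smooth interpolating Hamiltonian $\Xi_\Psi$ on $C$, orientation-invariant and hence descending to $\mathbb{RP}^2$; one shows that, near $g_0$, $g$ is Zoll with all prime geodesics of length $2\pi$ precisely when $\Xi_\Psi$ is constant. Set $\Phi(\Psi)=\Xi_\Psi-\mathrm{mean}(\Xi_\Psi)$, so $\Phi(1)=0$ and $\Phi^{-1}(0)$ is (locally) the family we seek. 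The linearisation is the heart of the matter: for $\Psi_t=1+tf+O(t^2)$ the arc-length element is $\sqrt{\Psi_t}\,ds_0=(1+\tfrac t2 f)\,ds_0+O(t^2)$, so the $g_t$-length of a great circle $\gamma$ is $2\pi+\tfrac t2\int_\gamma f\,ds_0+O(t^2)$, and since $\gamma$ is critical for $\mathrm{Length}_{g_0}$ the nearby closed $g_t$-geodesic has this same length to first order; as the values of $\Xi_\Psi$ are exactly these first-order lengths,
\[ D\Phi(1)f=\tfrac12\,\mathcal R f \ \ (\mathrm{mod\ constants}),\qquad \mathcal R f(\gamma)=\int_\gamma f\,ds_0, \]
the Funk transform.

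Next I would analyse $\mathcal R$. Expanding in spherical harmonics and using the Funk--Hecke formula, $\mathcal R$ acts on harmonics of degree $\ell$ as multiplication by $2\pi P_\ell(0)$, which vanishes for odd $\ell$ and is a nonzero scalar of size $\asymp\ell^{-1/2}$ for even $\ell$; hence $\ker\mathcal R=\odd$ and $\mathcal R$ maps the even part of $C^\infty(S^2)$ isomorphically onto $C^\infty(\mathbb{RP}^2)$, so $D\Phi(1)$ is surjective with kernel $\odd$. More precisely, $\mathcal R$ is an elliptic Fourier integral operator of order $-\tfrac12$ attached to the point--great-circle incidence relation, so $\mathcal R\mathcal R^{*}$ is an elliptic pseudodifferential operator of order $-1$ and $\mathcal R$ has the explicit right inverse $\mathcal R^{*}(\mathcal R\mathcal R^{*})^{-1}$, an operator of order $+\tfrac12$; the same structure persists for $D\Phi(\Psi)$ with $\Psi$ near $1$, so each such $D\Phi(\Psi)$ has a right inverse, but one that loses half a derivative.

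Finally I would integrate. Because of this half-derivative loss the ordinary implicit function theorem fails, so I would instead verify that $\Phi$ is a smooth tame map between the relevant tame Fr\'echet spaces of functions and that the right inverses of $D\Phi(\Psi)$ obey tame estimates uniformly for $\Psi$ near $1$, and then invoke the Nash--Moser implicit function theorem (Hamilton's form). This exhibits $\Phi^{-1}(0)$ near $g_0$ as a tame submanifold with tangent space $\ker D\Phi(1)=\odd$; equivalently, for each $f\in\odd$ one solves the Moser-type evolution equation $\dot\Guil_t=-D\Phi(\Guil_t)^{+}\bigl((\partial_t\Phi)(\Guil_t)\bigr)$, $\Guil_0=1$, $\dot\Guil_0=f$ (where $^{+}$ is the right inverse above) by a Nash--Moser iteration, producing the desired smooth one-parameter family $g_t=\Guil_t g_0$ of Zoll metrics whose prime geodesics all have length $2\pi$. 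The main obstacle is exactly the derivative loss: inverting the Funk transform costs half a derivative, so the tame estimates that drive the Nash--Moser scheme --- control of $(\mathcal R\mathcal R^{*})^{-1}$ and of its $\Psi$-dependent perturbations, and tameness of the composition of $\Phi$ with these inverses --- must be proved by hand from the pseudodifferential and Fourier-integral calculus; by comparison, the construction and smooth (tame) dependence of the interpolating Hamiltonian $\Xi_\Psi$ and the handling of the constant mode are routine once that calculus is available.
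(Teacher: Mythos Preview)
The paper does not prove this statement at all: Theorem~\ref{Gu} is quoted as Guillemin's result \cite{Gu76} and used as a black box, with no argument supplied. So there is no ``paper's own proof'' to compare against.

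That said, your outline is essentially Guillemin's original strategy: set up a Zoll-defect map on conformal factors, compute its linearisation at $\Psi=1$ as (half) the Funk transform $\mathcal R$, identify $\ker\mathcal R=\odd$ via Funk--Hecke, observe that $\mathcal R$ is an elliptic FIO of order $-\tfrac12$ so a right inverse exists but loses half a derivative, and then close with Nash--Moser. The broad architecture is right, and the spherical-harmonic computation is correct.

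Two places where the sketch is loose. First, the construction of the interpolating Hamiltonian $\Xi_\Psi$ and the equivalence ``$g$ is Zoll with common period $2\pi$ $\Leftrightarrow$ $\Xi_\Psi$ is constant'' is asserted rather than argued; in Guillemin's paper this is the content of the averaging/symplectic-reduction step, and it requires genuine work (and care with the normalisation: constant versus zero corresponds to common period versus common period equal to $2\pi$). Second, the final paragraph conflates two mechanisms: once Nash--Moser gives you that $\Phi^{-1}(0)$ is a tame submanifold tangent to $\odd$, you already have the desired curves $t\mapsto\Guil_t$ as coordinate curves in a tame chart; the additional ``Moser-type evolution equation'' you write down is not how the family is actually produced and would itself need a Nash--Moser existence argument, so it adds nothing. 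If you want a self-contained write-up, state the Nash--Moser hypotheses you are invoking (smooth tame map, smooth tame family of right inverses for $D\Phi(\Psi)$) and verify them from the FIO calculus, then read off the one-parameter families directly from the implicit-function conclusion.
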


Note that this result is a converse to P.~Funk's theorem \cite{Fu13},
to the effect that a smooth variation~$g_t=\Phi_t g_0$ of the round
metric by smooth Zoll metrics necessarily satisfies~$(d \Phi_t/dt)
|_{t=0}^{\phantom{I}} \in \odd$.  A survey of Zoll surfaces appeared
in \cite[Chapter~4]{Bes}, see also \cite{LM}.

We exploit such Guillemin deformations to show that the round metric
is not even a local maxinum of the ratio~$L/D$ among Zoll surfaces.
The precise statement of our result relies on the notion of a Y-like
set.

\begin{definition}
\label{ylike}
A subset of the unit circle is called {\em Y-like\/} if it contains a
triple of vectors~$\{u,v,w\}$ such that there exist positive real
numbers~$a>0, b>0, c>0$ satisfying~$au+bv+cw=0$.  A subset of the unit
tangent bundle~$US^2$ of~$S^2$ will be called {\em Y-like} if its
intersection with the unit tangent vectors at~$p$ is {\em Y-like} for
every~$p\in S^2$.
\end{definition}

Note that a subset of the unit circle is~$Y$-like if and only if every
open semicircle contains an element of the set.

We will denote by~$ds_0$ the element of length for the round
metric~$g_0$ on the sphere.  The notion of an \good function is
motivated in Remark~\ref{26} below.

\begin{definition}
An odd function~$f$ is called {\em \good\/} if the set of unit tangent
directions to great half-circles~$\tau$ satisfying~$\int_{\tau} f
ds_0<0$, is a Y-like subset of~$US^2$.
\end{definition}

\begin{theorem}
\label{14}
If $f$ is an \good function then the smooth variation~$\{g_t\} =
\Guil_t g_0$ of the round metric~$g_0$ by smooth Zoll metrics
satisfies $L(g_t) >2 D(g_t)$ for sufficiently small~$t>0$.
\end{theorem}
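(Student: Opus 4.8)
The metrics $g_t=\Guil_t g_0$ are Zoll, and by Guillemin's theorem every prime periodic geodesic of $(S^2,g_t)$ has length $2\pi$; since every nontrivial closed geodesic on a Zoll surface runs over a prime one an integer number of times, $L(g_t)=2\pi$ for all $t$. As $D(g_0)=\pi$, the inequality $L(g_t)>2D(g_t)$ is equivalent to $D(g_t)<\pi$, so the theorem says that the \good hypothesis forces the diameter to \emph{strictly decrease} along the Guillemin deformation. The plan is therefore to produce, for all small $t>0$ and every $p,q\in S^2$, a path from $p$ to $q$ of $g_t$-length $<\pi$; since $S^2\times S^2$ is compact and $d_{g_t}$ is continuous, this gives $D(g_t)=\max_{p,q}d_{g_t}(p,q)<\pi$.

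Since $\Guil_t=1+tf+O(t^2)$ in $C^\infty$, any curve $\gamma$ satisfies $\mathrm{Length}_{g_t}(\gamma)=\mathrm{Length}_{g_0}(\gamma)+\tfrac{t}{2}\int_\gamma f\,ds_0+O(t^2)$, uniformly. Fix $p$, and for $v\in U_pS^2$ let $\tau_v$ be the great half-circle issuing from $p$ in direction $v$, of $g_0$-length $\pi$ and ending at $a(p)$. On the round sphere $d_{g_0}(p,q)=\pi-d_{g_0}(q,a(p))$, so if $d_{g_0}(q,a(p))>w_0$ then $d_{g_t}(p,q)\le(1+Ct)(\pi-w_0)<\pi$ for $t$ small; only points $q$ in a fixed small ball about $a(p)$ require work. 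For such $q$, write $w=d_{g_0}(q,a(p))$ and let $v^*\in U_pS^2$ be the initial direction of the minimizing $g_0$-geodesic from $p$ to $q$, so $q$ lies on $\tau_{v^*}$ at distance $w$ from $a(p)$. I will use the \good hypothesis in the form: for each $p$ the set $N_p=\{v\in U_pS^2:\int_{\tau_v}f\,ds_0<0\}$ is Y-like, i.e.\ is not contained in any closed semicircle of $U_pS^2$; hence every $v^*$ admits $v\in N_p$ with $\angle(v,v^*)<\pi/2$. A compactness argument on $US^2$ then upgrades this to uniform constants $\alpha_0<\pi/2$ and $c_0>0$ with the property that every $(p,v^*)$ admits $v$ with $\angle(v,v^*)\le\alpha_0$ and $\int_{\tau_v}f\,ds_0\le -c_0$ (for fixed $\alpha_0,c_0$ the open version of this property is open in $(p,v^*)$, and these sets exhaust $US^2$ as $\alpha_0\uparrow\pi/2$ and $c_0\downarrow0$).

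The main computation estimates a detour path: follow $\tau_v$ from $p$ until it is at distance $\eta=\lambda w$ from $a(p)$ (take $\lambda=1/\cos\alpha_0$), then run along the minimizing $g_0$-geodesic from that point to $q$. In normal coordinates centered at $a(p)$ --- where $\tau_v$ and $\tau_{v^*}$ are geodesic rays meeting at angle $\alpha=\angle(v,v^*)\le\alpha_0$ --- the $g_0$-length of this path works out to $\pi-w\,\mu(\lambda,\alpha)+O(w^3)$, where $\mu(\lambda,\alpha)=\lambda-\sqrt{\lambda^2-2\lambda\cos\alpha+1}$ is bounded below by a constant $\mu_0>0$ for all $\alpha\le\alpha_0$, precisely because $\lambda>1/(2\cos\alpha_0)$. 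The part of the path off $\tau_v$ has $g_0$-length $O(w)$, so the path's total $f$-integral is at most $\int_{\tau_v}f\,ds_0+O(w)\le -c_0+O(w)$. Substituting into the length expansion,
\[
d_{g_t}(p,q)\ \le\ \pi-w\mu_0-\tfrac{c_0}{2}\,t+O(w^3)+O(tw)+O(t^2),
\]
and choosing $w_0,t^*>0$ small, depending only on $f$ and the round metric, makes the right-hand side $<\pi$ for all $w\le w_0$ and $0<t\le t^*$. Hence $D(g_t)<\pi$ on $(0,t^*]$, so $L(g_t)=2\pi>2D(g_t)$.

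The crux should be the detour estimate. On the one hand, the geometry must be arranged so that the angular slack a short path can tolerate is exactly an open semicircle's worth at each point --- this is what an \good function provides, and what makes the hypothesis both natural and sufficient --- the governing computation being that $\mu(\lambda,\alpha)>0$ for every $\alpha<\pi/2$ once $\lambda$ is taken large. On the other hand, the three error terms ($O(t^2)$ from the conformal expansion, $O(w^3)$ from normal coordinates, $O(tw)$ from their interaction) must be dominated uniformly as $t\to0$ and $q\to a(p)$ simultaneously --- which is what forces the compactness step producing uniform $\alpha_0<\pi/2$ and $c_0>0$ from the pointwise Y-like hypothesis.
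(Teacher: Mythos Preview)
Your proof is correct and follows the same global outline as the paper---reduce to $D(g_t)<\pi$, dispose of pairs that are not nearly antipodal by continuity, and for nearly antipodal pairs exhibit a detour of $g_t$-length $<\pi$ using the Y-like condition plus a compactness upgrade to uniform constants---but the detour construction and the length estimate are genuinely different. The paper introduces, for each minimizing $g_0$-geodesic $\gamma$ from $p$ to $q$, the family $S\ppi_\gamma$ of two-segment piecewise geodesics of equal-length pieces with $L_0(\tau)\le\pi$; it shows (via a limit argument as $L_0(\gamma)\to\pi$, where $S\ppi_\gamma$ degenerates to great half-circles) that some $\tau\in S\ppi_\gamma$ has $\int_\tau f\,ds_0<-\nu(f)$, and then passes from energy to length by Cauchy--Schwarz to conclude $L_t(\tau)<L_0(\tau)\le\pi$. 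Your route instead follows a great half-circle $\tau_v$ most of the way to $a(p)$ and appends a short $g_0$-geodesic correction to $q$, controlling the $g_0$-length by an explicit normal-coordinate computation giving $\pi-w\mu(\lambda,\alpha)+O(w^3)$, and controlling the $g_t$-length by the first-order expansion $\sqrt{\Guil_t}=1+\tfrac{t}{2}f+O(t^2)$. The paper's choice buys cleanliness: since $L_0(\tau)\le\pi$ holds by construction for $\tau\in S\ppi_\gamma$, there is no need for your $\mu(\lambda,\alpha)>0$ calculation or the $O(w^3)$ bookkeeping, and the energy argument sidesteps Taylor-expanding the conformal factor. Your choice buys explicitness: the estimate $d_{g_t}(p,q)\le\pi-\mu_0 w-\tfrac{c_0}{2}t+O(w^3+tw+t^2)$ is quantitative, and the role of the open-semicircle condition is made transparent through the positivity of $\mu(\lambda,\alpha)$ precisely when $\alpha<\pi/2$ and $\lambda>1/(2\cos\alpha)$.
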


Combined with the existence of \good functions proved in
Section~\ref{five}, our theorem yields the desired counterexample.

These metrics also provide a counterexample to another conjecture of
Nabutovsky and Rotman \cite[Conjecture~1, p.~13]{NR05}.  Their
conjecture would imply that for every point~$p$ of a closed Riemannian
manifold~$(M,g)$, there is a nontrivial geodesic loop at~$p$ of length
at most~$2 D(g)$.  Here a geodesic loop is a geodesic segment with
identical endpoints.  This conjecture is easily seen to be true for
non-simply-connected manifolds, by exploiting non-contractible loops,
cf.~\cite{SGT}.  In our examples, the shortest geodesic loop at
every~$p$ has length~$2\pi$, while the diameter is strictly smaller
than~$\pi$.

Sections~\ref{two} and \ref{three} contain a proof of Theorem~\ref{14}
modulo on the existence of \good functions.  The existence of the
latter is verified in Sections~\ref{four} and \ref{five}.

\section{\Good odd functions}
\label{two}

Our goal is to find \good functions~$f\in \odd$, such that the
corresponding Guillemin deformation~$g_t$ of the standard round
metric~$g_0$ satisfies~$D(g_t)<\pi$ for~$t$ small enough (while all
geodesics remain closed of length~$2\pi$).  By the compactness of the
unit tangent circle bundle~$US^2$, we obtain the following lemma.

\begin{lemma}
For every \good function~$f$ there is a constant~$\nu(f)>0$ with the
following property.  For every~$(p,v)\in US^2$, there is a great
half-circle~$\tau$ issuing from~$p\in S^2$, forming an acute angle
with~$v$, and satisfying~$\int_{\tau} f ds_0<-\nu(f)$.
\end{lemma}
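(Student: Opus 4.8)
The plan is to reduce the statement to a compactness argument for a single continuous function on~$US^2$. For~$(p,v)\in US^2$ let~$\tau_{(p,v)}$ be the great half-circle of~$(S^2,g_0)$ issuing from~$p$ with initial velocity~$v$, and set~$I(p,v)=\int_{\tau_{(p,v)}}f\,ds_0$. Since~$f$ is smooth and the arc~$\tau_{(p,v)}$ (together with its length element~$ds_0$) depends smoothly on~$(p,v)$, the function~$I\colon US^2\to\R$ is continuous. In this language the \good hypothesis says: for every~$p\in S^2$ the set~$N_p=\{u\in U_pS^2: I(p,u)<0\}$ is Y-like, i.e.\ (recall that a subset of the unit circle is Y-like exactly when every open semicircle contains an element of it) every open semicircle of~$U_pS^2$ meets~$N_p$. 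Note also that for fixed~$(p,v)$ the set of unit vectors~$u$ forming an acute angle with~$v$ is precisely the open semicircle~$\{u\in U_pS^2:\langle u,v\rangle_{g_0}>0\}$, so the \emph{non-uniform} form of the lemma (existence of some half-circle~$\tau$ through~$p$ at an acute angle with~$v$ and~$\int_\tau f\,ds_0<0$) is immediate from the \good hypothesis applied at~$p$.

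To obtain the uniform constant~$\nu(f)$ I would argue by contradiction. If no~$\nu(f)>0$ works, choose~$(p_n,v_n)\in US^2$ with~$I(p_n,u)\ge -1/n$ for every~$u\in U_{p_n}S^2$ satisfying~$\langle u,v_n\rangle_{g_0}>0$. By compactness of~$US^2$, pass to a subsequence with~$(p_n,v_n)\to(p_\infty,v_\infty)$. Given any~$u_\infty\in U_{p_\infty}S^2$ with~$\langle u_\infty,v_\infty\rangle_{g_0}>0$, pick~$u_n\in U_{p_n}S^2$ with~$u_n\to u_\infty$ (for instance by parallel transport of~$u_\infty$ along the short~$g_0$-geodesic from~$p_\infty$ to~$p_n$); then~$\langle u_n,v_n\rangle_{g_0}\to\langle u_\infty,v_\infty\rangle_{g_0}>0$, so~$\langle u_n,v_n\rangle_{g_0}>0$ for~$n$ large, whence~$I(p_n,u_n)\ge -1/n$, and continuity of~$I$ forces~$I(p_\infty,u_\infty)\ge 0$. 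As~$u_\infty$ ranges over the entire open semicircle~$\{u\in U_{p_\infty}S^2:\langle u,v_\infty\rangle_{g_0}>0\}$, this semicircle avoids~$N_{p_\infty}$, contradicting the \good hypothesis at~$p_\infty$. Hence some~$\nu(f)>0$ exists, and for a given~$(p,v)$ the great half-circle~$\tau=\tau_{(p,u)}$ realizing~$I(p,u)<-\nu(f)$ for a suitable~$u$ in the acute semicircle of~$v$ is the arc claimed.

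I expect the only genuine obstacle to be the (routine but slightly fiddly) verification that~$I$ is continuous on~$US^2$ — that the geodesic half-circle, its length element, and hence the integral vary continuously with~$(p,v)$ — together with the choice of comparison vectors~$u_n\to u_\infty$ at the moving base points. Once those two facts and the compactness of~$US^2$ are available, the proof is just the translation of the characterization ``every open semicircle meets~$N_p$'' into a uniform estimate.
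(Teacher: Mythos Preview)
Your argument is correct and is precisely the compactness argument the paper has in mind; indeed the paper gives no explicit proof beyond the one-line remark ``By the compactness of the unit tangent circle bundle~$US^2$,'' and your proposal simply unpacks that remark in full detail. The translation of the Y-like condition into ``every open semicircle meets~$N_p$'' and the subsequent contradiction via a convergent subsequence is exactly what is intended.
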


Denote by~$L_0$ the length functional with respect to the round
metric~$g_0$.  Given a geodesic segment~$\gamma$ of
length~$L_0(\gamma)<\pi$, we denote by~$\ppi_{\gamma}$
the~$1$-parameter family of piecewise geodesic paths with the
following two properties:

\begin{itemize}
\item
the path joins the endpoints of~$\gamma$,
\item
the path consists of a pair of imbedded geodesic segments of equal
length.
\end{itemize}

Elements of~$\ppi_{\gamma}$ are parametrized by the non-smooth
midpoint of the piecewise geodesic path, which traces out the
equidistant great circle of the two endpoints.  We let~$S\ppi_{\gamma}
\subset \ppi_{\gamma}$ be the closed subfamily consisting of the
shorter paths, namely
\[
S\ppi_{\gamma} = \left\{ \left. \tau\in \ppi_{\gamma} \right|
L_0(\tau)\leq \pi \right\}.
\]
If~$\gamma$ is a great semi-circle, define~$\ppi_{\gamma}$ to be the
circular family of great half-circles joining the endpoints
of~$\gamma$, and the subfamily~$S\ppi_{\gamma}$ to be the family of
paths forming either an acute or a right angle with~$\gamma$ at the
endpoints.  The following lemma is obvious but crucial.

\begin{lemma}
\label{crucial}
The family~$S\ppi_{\gamma}$ for a geodesic segment~$\gamma$
with~$L_0(\gamma)=\pi$ is the limit of the families~$S\ppi_{\gamma_i}$
for subarcs~$\gamma_i$ of~$\gamma$ of length tending to~$\pi$.  In
fact, if~$\gamma_i$ is any sequence of minimizing geodesic segments
converging to~$\gamma$, then~$S\ppi_{\gamma_i}$ converges
to~$S\ppi_\gamma$.
\end{lemma}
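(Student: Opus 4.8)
The plan is to make each family $S\ppi_{\gamma_i}$ completely explicit and then read off the limit. Normalize so that $\gamma$ is the great half-circle from a point $N\in S^2$ to its antipode $S=a(N)$, lying inside a fixed great circle $G$; let $E\subset S^2$ be the great circle equidistant from $N$ and $S$, and $\mu_\infty\in E$ the midpoint of $\gamma$. For each $i$ write $p_i,q_i$ for the endpoints of $\gamma_i$, $\ell_i=L_0(\gamma_i)\in(0,\pi)$ for its length, $\mu_i$ for its midpoint, $G_i$ for the great circle carrying $\gamma_i$, $C_i$ for the great circle equidistant from $p_i$ and $q_i$, and $d_0$ for the round distance function. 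Recall that $\ppi_{\gamma_i}$ is parametrized by $m\in C_i$, the associated path being the concatenation of the minimizing segments from $p_i$ to $m$ and from $m$ to $q_i$, of total length $2\,d_0(p_i,m)$; that $C_i$ is orthogonal to $G_i$ and meets it at $\mu_i$ and its antipode $a(\mu_i)$; and that on $C_i$ the function $m\mapsto d_0(p_i,m)$ attains its minimum $\ell_i/2$ only at $\mu_i$, its maximum $\pi-\ell_i/2$ only at $a(\mu_i)$, and is strictly monotone on each of the two arcs in between.

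First I would identify $S\ppi_{\gamma_i}$ as an explicit sub-arc of $C_i$. By definition $S\ppi_{\gamma_i}$ corresponds, under the midpoint parametrization, to $\{\,m\in C_i:\ 2\,d_0(p_i,m)\le\pi\,\}$; since $\ell_i/2<\pi/2<\pi-\ell_i/2$, the monotonicity recalled above shows that this is the closed sub-arc of $C_i$ which contains $\mu_i$ and is bounded by the two points of $C_i$ lying at distance exactly $\pi/2$ from $p_i$. On $C_i$ the distance to $p_i$ equals the distance to $q_i$, so these two boundary points lie at distance $\pi/2$ from both $p_i$ and $q_i$; but the points of $S^2$ at distance $\pi/2$ from both $p_i$ and $q_i$ are exactly the two poles of $G_i$. (Indeed, the poles of $G_i$ lie at distance $\pi/2$ from every point of $G_i$; they lie in the plane of $C_i$, because that plane is orthogonal to the plane of $G_i$; and $C_i$, being a great circle distinct from the polar great circle of $p_i$, meets the latter in exactly two, antipodal, points.) Hence $S\ppi_{\gamma_i}$ is precisely the closed arc of $C_i$ joining the two poles of $G_i$ and passing through $\mu_i$.

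The limit is then immediate from the continuous dependence of all these objects on $\gamma_i$. As $\gamma_i\to\gamma$ we have $p_i\to N$, $q_i\to S$, $\mu_i\to\mu_\infty$, and $G_i\to G$ (the great circle carrying a geodesic segment depends continuously on the segment); hence $C_i\to E$, and the two poles of $G_i$ converge to the two poles of $G$, which are precisely the two points of $E$ at distance $\pi/2$ from $\mu_\infty$. Therefore, parametrizing families by their midpoints, $S\ppi_{\gamma_i}$ converges to the family of paths obtained by concatenating the minimizing segments from $N$ to $m_\infty$ and from $m_\infty$ to $S$, with $m_\infty$ ranging over the closed half $H$ of $E$ centered at $\mu_\infty$. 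For $m_\infty\in E$ this path is the union of two great-circle arcs of length $\pi/2$, both lying on the great circle through $N$ and $m_\infty$, and hence coincides with the great half-circle from $N$ to $S$ through $m_\infty$; moreover it makes an angle $d_0(m_\infty,\mu_\infty)$ with $\gamma$ at $N$, and the same angle at $S$, so that angle is acute or right exactly when $m_\infty\in H$. This is exactly the description of $S\ppi_\gamma$ from the definition, which completes the argument; the case of subarcs $\gamma_i\subset\gamma$ is the special instance in which $G_i\equiv G$, so that only $C_i$ and $\mu_i$ vary with $i$.

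The step I expect to carry the proof is the intrinsic identification of the two endpoints of $S\ppi_{\gamma_i}$ as the poles of $G_i$. Tracking these endpoints directly through the literal condition $d_0(p_i,m)=\pi/2$ is delicate, since both $C_i$ and the polar great circle of $p_i$ degenerate onto $E$ as $i\to\infty$, so a priori their intersection need not behave well; recognizing that this intersection coincides with the pair of poles of $G_i$, whose convergence $G_i\to G$ is robust, is what makes the passage to the limit transparent, and it is essentially the only place where care is needed.
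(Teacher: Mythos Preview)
Your argument is correct. The paper itself offers no proof of this lemma, merely declaring it ``obvious but crucial,'' so there is nothing to compare against at the level of method. Your explicit identification of the two endpoints of $S\ppi_{\gamma_i}$ with the poles of the great circle $G_i$ carrying $\gamma_i$ is the right way to make that obviousness rigorous: it replaces the naive description ``intersection of $C_i$ with the polar circle of $p_i$,'' in which both circles collapse onto $E$ as $i\to\infty$, by data (the poles of $G_i$) that depend continuously on $\gamma_i$ all the way to the limit. Once that is in hand, the convergence of arcs and hence of the families is immediate, exactly as you say.
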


Our main technical tool in the next section will be the following
result.

\begin{lemma}
\label{24}
If~$f$ is \good then there is an~$\epsilon>0$ so that for all geodesic
segments~$\gamma$ with~$\pi-\epsilon \leq L_0(\gamma) \leq \pi$, there
is a path~$\tau\in S\ppi_{\gamma}$ with~$\int_{\tau} f ds_0<-\nu(f)$.
\end{lemma}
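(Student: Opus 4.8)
The plan is to combine the previous lemma (which gives, for a \emph{full} great half-circle issuing from any point in any acute direction, the uniform bound $\int_\tau f\,ds_0 < -\nu(f)$) with the limiting behaviour of the families $S\ppi_\gamma$ recorded in Lemma~\ref{crucial}, via a compactness argument on the space of geodesic segments of length close to $\pi$. First I would fix the \good function $f$ and set $\nu=\nu(f)>0$. For a great semicircle $\gamma$ (i.e.\ $L_0(\gamma)=\pi$), by definition $S\ppi_\gamma$ consists of the great half-circles joining the endpoints of $\gamma$ and meeting $\gamma$ at an acute or right angle at the endpoints; applying the previous lemma to the pair $(p,v)$ where $p$ is an endpoint of $\gamma$ and $v$ its initial direction produces a half-circle $\tau_0\in S\ppi_\gamma$ with $\int_{\tau_0} f\,ds_0<-\nu$. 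The functional $\tau\mapsto\int_\tau f\,ds_0$ is continuous on the compact circular family $\ppi_\gamma$, so in fact there is a whole relatively open arc of such $\tau$'s, and the infimum over $\gamma$ (ranging in the compact space of great semicircles) of $\min_{\tau\in S\ppi_\gamma}\int_\tau f\,ds_0$ is strictly less than $-\nu$; call it $-\nu'$ with $\nu'>\nu>0$. Actually it suffices to note that for \emph{every} great semicircle $\gamma$ there exists $\tau\in S\ppi_\gamma$ with $\int_\tau f\,ds_0<-\nu$, with a strict inequality and a margin that can be made uniform by compactness.

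Next I would argue by contradiction. Suppose no such $\epsilon$ exists. Then there is a sequence of geodesic segments $\gamma_i$ with $L_0(\gamma_i)\to\pi$, $L_0(\gamma_i)\le\pi$, such that every path $\tau\in S\ppi_{\gamma_i}$ satisfies $\int_\tau f\,ds_0\ge-\nu$. Since each $\gamma_i$ with $L_0(\gamma_i)\le\pi$ is a minimizing geodesic segment, and the space of minimizing geodesic segments of length $\le\pi$ is compact, after passing to a subsequence $\gamma_i\to\gamma$ with $L_0(\gamma)=\pi$, i.e.\ $\gamma$ is a great semicircle. By Lemma~\ref{crucial}, $S\ppi_{\gamma_i}\to S\ppi_\gamma$ in the appropriate (Hausdorff / uniform) sense. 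Now choose, using the first paragraph, a path $\tau_\infty\in S\ppi_\gamma$ with $\int_{\tau_\infty} f\,ds_0<-\nu$; one must be mildly careful here because $S\ppi_\gamma$ for the limiting semicircle includes boundary cases (right angles), so I would instead pick $\tau_\infty$ in the \emph{interior} of $S\ppi_\gamma$ (an acute angle strictly), which is possible since the previous lemma yields an acute half-circle and the integral functional is continuous, so an open neighbourhood of acute half-circles also has integral $<-\nu$. By the convergence $S\ppi_{\gamma_i}\to S\ppi_\gamma$ there are paths $\tau_i\in S\ppi_{\gamma_i}$ with $\tau_i\to\tau_\infty$ uniformly, whence $\int_{\tau_i} f\,ds_0\to\int_{\tau_\infty} f\,ds_0<-\nu$, so $\int_{\tau_i} f\,ds_0<-\nu$ for $i$ large, contradicting the assumption on $\gamma_i$. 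This proves the lemma.

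The main obstacle, and the only genuinely delicate point, is the matching of the boundary of the subfamily $S\ppi_\gamma$ under the limit: for subarcs $\gamma_i$ of length $<\pi$ the constraint defining $S\ppi_{\gamma_i}$ is $L_0(\tau)\le\pi$, whereas for the limiting great semicircle it becomes the angle condition (acute or right angle at the endpoints), and one must check that a path $\tau_\infty$ chosen with \emph{strictly} acute endpoint angle and integral $<-\nu$ is genuinely approximable from within $S\ppi_{\gamma_i}$, i.e.\ that the nearby two-segment paths over $\gamma_i$ are indeed the \emph{shorter} ones (length $\le\pi$). This is exactly the content of the last sentence of Lemma~\ref{crucial} — convergence of $S\ppi_{\gamma_i}$ to $S\ppi_\gamma$ for minimizing $\gamma_i\to\gamma$ — so invoking it carefully, together with the openness of the strict inequality $\int_\tau f\,ds_0<-\nu$ and of the strictly-acute-angle condition, disposes of the difficulty. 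Everything else is a routine compactness-and-continuity argument.
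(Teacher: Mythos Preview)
Your proposal is correct and follows essentially the same approach as the paper: argue by contradiction, extract a subsequence $\gamma_i\to\gamma$ with $L_0(\gamma)=\pi$, invoke Lemma~\ref{crucial} to pass from $S\ppi_{\gamma_i}$ to $S\ppi_\gamma$, and derive a contradiction with the amply negative condition via the previous lemma. Your extra care in choosing $\tau_\infty$ with a \emph{strictly} acute endpoint angle is a minor refinement that the paper leaves implicit in its use of Lemma~\ref{crucial}, but the method is the same.
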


\begin{proof}
If no such~$\epsilon$ exists, then there is a sequence~$\{ \gamma_i
\}$ with~$L_0(\gamma_i)<\pi$ and~$L_0(\gamma_i) \to \pi$ such that all
$\tau\in S\ppi_{\gamma_i}$ satisfy~$\int_{\tau} f ds_0\geq -\nu(f)$.
There is a convergent subsequence such that~$\gamma_i'(0) \to
\gamma'(0)$ with~$L_0(\gamma)=\pi$.  By Lemma~\ref{crucial}, the
family~$S\ppi_{\gamma_i}$ converges to~$S\ppi_{\gamma}$.  By the
continuity of~$f$, for every~$\tau\in S\ppi_{\gamma}$ we have
$\int_{\tau} f ds_0\geq -\nu(f)$, contradicting the assumption that
the function~$f$ is \good\!.
\end{proof}

\begin{remark}
\label{26}
Given a piecewise geodesic~$\tau$ over which the integral of~$f$ is
negative, we will show in the next section that the length of~$\tau$
decreases under the Guillemin deformantion.  If, in addition, the
curve~$\tau$ has length at most~$\pi$ with respect to the metric
$g_0$, then the length with respect to the metric~$g_t$ will be
shorter than~$\pi$.  That is why we need to work with piecewise
geodesics specifically in~$S\ppi_{\gamma}$.  In order to make the
continuity argument above work, one needs to find in each
$S\ppi_{\gamma}$, a curve~$\tau$ over which~$f$ integrates negatively.
This leads to the \good condition we introduced.
\end{remark}

\section{Diameter of Guillemin deformation}
\label{three}

Let~$\Guil$ be the conformal factor of the Guillemin deformation, as
in Theorem~\ref{Gu} above.  Thus, the metric~$g_t=\Guil_t g_0$ is
Zoll, while~$\Guil_0=1$ and~$(d \Guil_t /dt) |_{t=0} ^{\phantom{I}}
=f$.  Consider the arclength parametrisation~$\tau(s)$ of a
path~$\tau\subset S^2$ for the round metric~$g_0$.

\begin{lemma}
\label{31}
The energy~$E_t(\tau)$ of a path~$\tau \subset S^2$ for the metric
$g_t$ satisfies
\[
\left. \frac{d E_t}{dt} \right|_{t=0}^{\phantom{I}} = \int_\tau f
\circ \tau \; ds_0.
\]
\end{lemma}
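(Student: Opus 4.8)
The plan is to reduce the statement to an elementary differentiation under the integral sign. Parametrise $\tau$ by $g_0$-arclength, $\tau=\tau(s)$ with $s\in[0,L_0(\tau)]$, so that $g_0(\tau'(s),\tau'(s))\equiv 1$ and the measure $ds$ agrees with $ds_0$ along $\tau$. With respect to this fixed parametrisation the energy of $\tau$ for the metric $g_t$ is
\[
E_t(\tau)=\int_0^{L_0(\tau)} g_t\bigl(\tau'(s),\tau'(s)\bigr)\,ds .
\]
Since the Guillemin deformation is conformal, $g_t=\Guil_t g_0$, and since $\tau$ has unit $g_0$-speed, one has $g_t(\tau'(s),\tau'(s))=\Guil_t(\tau(s))\,g_0(\tau'(s),\tau'(s))=\Guil_t(\tau(s))$, so that $E_t(\tau)=\int_0^{L_0(\tau)}\Guil_t(\tau(s))\,ds$.

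It then remains to differentiate this last expression at $t=0$ and to commute $\frac{d}{dt}$ with the integral. This is legitimate because, by Theorem~\ref{Gu}, the map $(t,p)\mapsto\Guil_t(p)$ is smooth; hence $(t,s)\mapsto\Guil_t(\tau(s))$ is continuous together with its $t$-derivative on a compact set of the form $[-\delta,\delta]\times[0,L_0(\tau)]$ for some $\delta>0$, which is all that is needed for the interchange (at the finitely many breakpoints of a piecewise geodesic $\tau$ the integrand and its $t$-derivative remain bounded). Therefore
\[
\left.\frac{dE_t}{dt}\right|_{t=0}
=\int_0^{L_0(\tau)}\left.\frac{\partial}{\partial t}\Guil_t(\tau(s))\right|_{t=0}\,ds
=\int_0^{L_0(\tau)} f(\tau(s))\,ds
=\int_\tau f\circ\tau\;ds_0 ,
\]
where the penultimate equality uses the normalisation $(d\Guil_t/dt)|_{t=0}=f$ from Theorem~\ref{Gu}. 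This is the asserted formula.

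The computation carries no genuine difficulty: the one point deserving a word is the interchange of differentiation and integration, which is secured by the smoothness of the conformal factor $\Guil_t$ in the deformation parameter together with the compactness of the parameter interval of $\tau$. It is worth noting that the choice of the energy functional rather than the length functional is precisely what makes the derivative this clean, since no square root intervenes; this is exactly the form needed for the length estimates of this section invoked in Remark~\ref{26}.
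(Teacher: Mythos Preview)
Your proof is correct and follows essentially the same computation as the paper: parametrise by $g_0$-arclength, write $E_t(\tau)=\int \Guil_t\circ\tau\,ds_0$, and differentiate under the integral sign using $(d\Guil_t/dt)|_{t=0}=f$. You supply more justification for the interchange of $\tfrac{d}{dt}$ and $\int$ than the paper does, which is fine but not essential here.
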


\begin{proof}
We have
\[
\begin{aligned}
\frac{d}{dt}E_t(\tau) &=\frac d {dt}
\int_0^{L_0(\tau)}g_t(\tau'(s),\tau'(s)) ds \\&= \frac{d}{dt}
\int_\tau \Guil_t \circ \tau \; ds_0 \\&= \int_\tau \left( \tfrac d
{dt}\Guil_t \right) \circ \tau \; ds_0 \\&= \int_\tau f \circ \tau \;
ds_0
\end{aligned}
\]
at~$t=0$.
\end{proof}

\begin{proposition}
\label{23}
If~$f$ is \good\!, then the associated Guillemin deformation~$g_t =
\Guil_t g_0$ as in Theorem~\ref{Gu} satisfies~$D(g_t)<\pi$ for all
sufficiently small~$t>0$.
\end{proposition}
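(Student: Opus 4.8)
The plan is to show that for every point $p \in S^2$, and in fact for every geodesic segment realizing a distance near $\pi$, there is a competitor path from one endpoint to the other whose $g_t$-length is strictly less than $\pi$ for small $t > 0$; this forces $D(g_t) < \pi$. The starting observation is that since the round metric has diameter $\pi$, showing $D(g_t) < \pi$ is equivalent to showing that every pair of points at $g_t$-distance close to the supremum can be joined by a short-enough path, and the only points whose separation is close to $\pi$ are those whose round distance is close to $\pi$. So it suffices to control competitor paths joining endpoints of round geodesic segments $\gamma$ with $\pi - \epsilon \le L_0(\gamma) \le \pi$.

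First I would invoke Lemma~\ref{24}: since $f$ is \good, there is $\epsilon > 0$ and $\nu(f) > 0$ such that for every such $\gamma$ there is a piecewise geodesic path $\tau \in S\ppi_\gamma$ with $\int_\tau f \, ds_0 < -\nu(f)$. By definition of $S\ppi_\gamma$, this $\tau$ has round length $L_0(\tau) \le \pi$. Next I would upgrade the infinitesimal statement of Lemma~\ref{31} to a finite-$t$ estimate. The energy $E_t(\tau)$ is smooth in $t$ (the family $\Guil_t$ is smooth), and $E_0(\tau) = L_0(\tau)$ when $\tau$ is unit-speed for $g_0$; by Taylor expansion, $E_t(\tau) = L_0(\tau) + t \int_\tau f \, ds_0 + O(t^2)$. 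The $O(t^2)$ term needs to be controlled uniformly over the relevant family of paths $\tau$: this is where compactness enters. The paths $\tau$ range over $\bigcup_\gamma S\ppi_\gamma$ as $\gamma$ ranges over geodesic segments of length in $[\pi - \epsilon, \pi]$, which is a compact family of curves (in, say, the $C^1$ topology on maps of a fixed interval), and the second $t$-derivative of $E_t(\tau)$ depends continuously on $\tau$ and $t$, so it is bounded by some constant $C$ on $[0, t_0] \times \{\text{such }\tau\}$.

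Then I would assemble the estimate. For $\tau$ the path produced by Lemma~\ref{24} for a given $\gamma$, and for $0 < t \le t_0$,
\[
L_t(\tau)^2 \le 2 E_t(\tau) \cdot L_0(\tau) \le \left( L_0(\tau) + t\!\int_\tau f\, ds_0 + Ct^2 \right)\! \cdot \frac{2 L_0(\tau)}{2},
\]
more simply: using that $L_t(\tau)^2 \le L_0(\tau) \cdot 2 E_t(\tau)$ by Cauchy--Schwarz (with $E_t$ taken with respect to the $g_0$-arclength parametrization, which has total parameter length $L_0(\tau)$), and that $2E_t(\tau) \le 2L_0(\tau) + 2t\int_\tau f\,ds_0 + 2Ct^2 \le 2L_0(\tau) - 2t\nu(f) + 2Ct^2$, we get $L_t(\tau)^2 \le L_0(\tau)^2 - 2t L_0(\tau)\nu(f) + O(t^2) \le \pi^2 - 2t(\pi-\epsilon)\nu(f) + O(t^2)$, which is $< \pi^2$ for $t$ small enough, uniformly in $\gamma$. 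Since $\tau$ joins the endpoints of $\gamma$, this bounds the $g_t$-distance between those endpoints by $L_t(\tau) < \pi$. For pairs of points whose round distance is $\le \pi - \epsilon$, the $g_t$-distance is at most $(\pi - \epsilon)(1 + o(1)) < \pi$ for small $t$ by uniform closeness of $g_t$ to $g_0$. Hence $D(g_t) < \pi$ for all sufficiently small $t > 0$.

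The main obstacle is making the $O(t^2)$ error in the energy expansion uniform over all the relevant competitor paths simultaneously, and correctly packaging the compactness: one must be careful that the family of geodesic segments $\gamma$ with $L_0(\gamma)$ near $\pi$ is genuinely compact and that $S\ppi_\gamma$ varies continuously (this is exactly the content of Lemma~\ref{crucial}), so that a single $t_0$ works for all of them. A secondary subtlety is the Cauchy--Schwarz step converting the energy bound into a length bound while keeping track of the parametrization domain; this is routine but must be stated with the $g_0$-arclength parametrization so that $E_0 = L_0$ holds on the nose.
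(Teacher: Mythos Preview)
Your proposal is correct and follows essentially the same architecture as the paper's proof: split pairs of points according to whether their round distance is at least $\pi-\epsilon$, handle the near pairs by uniform closeness of $g_t$ to $g_0$, and for the nearly antipodal pairs use Lemma~\ref{24} to produce a competitor $\tau\in S\ppi_\gamma$ with $\int_\tau f\,ds_0<-\nu(f)$, then turn the energy drop into a length drop via Cauchy--Schwarz. The only minor variation is that the paper obtains the uniform finite-$t$ control by showing the derivative $\tfrac{d}{dt}E_t(\tau)$ stays below $-\tfrac12\nu(f)$ on a compact set of paths for all small $t$ (so $E_t$ is strictly decreasing), whereas you Taylor-expand and bound the second $t$-derivative; these are equivalent compactness arguments, and the paper's version spares you from tracking the $O(t^2)$ constant and the stray factor of $2$ in your Cauchy--Schwarz display.
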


\begin{proof}
Denote by~$L_t$ and~$d_t$ the length and the distance with respect to
the metric~$g_t$.  Let~$\epsilon>0$ be chosen as in Lemma~\ref{24},
and let~$\FD_{\epsilon}\subset S^2\times S^2$ be the set of nearly
antipodal pairs, defined by setting
\[
\FD_\epsilon = \left\{ (p,q)\in S^2\times S^2 \left|
d_0^{\phantom{I}}(p,q) \geq \pi-\epsilon \right. \right\}
\]
By continuity, there is a~$\delta>0$ such that whenever~$0<t< \delta$,
we have
\begin{equation}
\label{31b}
d_t(p,q)<\pi \hbox{ for all } (p,q)\not\in \FD_\epsilon.
\end{equation}
Now let~$(p,q)\in \FD_{\epsilon}$, and~$\gamma$ a minimizing geodesic
joining them.  Let
\[
N(\gamma)=\left\{ \tau\in S\ppi_{\gamma}\left| \int_{\tau} f
ds_0\leq-\nu(f) \right. \right\},
\]
and let~$N=\{\tau\in N(\gamma)| \pi-\epsilon \leq L_0(\gamma)\leq
\pi\}$.  By Lemma \ref{24}, whenever
\[
\pi-\epsilon\leq L_0(\gamma)\leq \pi,
\]
the set~$N(\gamma)$ is non-empty.  Furthermore, the sets~$N$
and~$N(\gamma)$ are compact.  Now for small~$t>0$, define a continuous
function~$F:N\times \R\to \R$ by setting~$F(\tau,t) =
\tfrac{dE_t(\tau)}{dt}$.  By Lemma~\ref{31} and the definition of~$N$,
we have~$F(\tau,0)\leq -\nu(f)$.  Hence by the compactness of~$N$ and
the continuity of~$F$ there is a real~$\delta'>0$ so that for
all~$0\leq t \leq \delta'$ and all~$\tau \in N$, we have~$F(\tau,t)<
-\frac 1 2 \nu(f)$.  Therefore the energy given by the expression
\[
\int_0^{L_0(\tau)}g_t(\tau'(s),\tau'(s)) ds_0
\]
is strictly decreasing in~$t$.  Hence for~$0<t\leq \delta'$, it is
strictly smaller than the quantity
\[
\int_0^{L_0(\tau)}g_0(\tau'(s),\tau'(s)) ds_0=L_0(\tau).
\]
In particular, we obtain for~$0<t\leq \delta'$,
\begin{eqnarray}
\nonumber L_t(\tau)&=&\int_0^{L_0(\tau)}\sqrt{g_t(\tau'(s),\tau'(s))}
ds_0\\ \nonumber &\leq & L_0(\tau)^{\frac 1
2}\Big(\int_0^{L_0(\tau)}g_t(\tau'(s),\tau'(s)) ds_0\Big)^{\frac 1
2}\\ \nonumber &<& L_0(\tau). \nonumber \end{eqnarray}
Thus for each pair~$(p,q) \in \FD_\epsilon$ and every~$0<t\leq
\delta'$, there is a path~$\tau$ from~$p$ to~$q$
with~$L_t(\tau)<L_0(\tau)\leq \pi$.  Hence~${\rm dist}_t(p,q)<\pi$.
Combined with \eqref{31b}, this yields the diameter bound~$D(g_t)<\pi$
whenever~$0<t<\min\{\delta,\delta'\}$, proving the proposition as well
as Theorem~\ref{14}.
\end{proof}

\section{Fine sets and their properties}
\label{four}

Recall that an open hemisphere is an open ball of radius~$\pi/2$
centered at any point of the unit sphere.  The construction of \good
functions in Section~\ref{five} exploits {\em fine\/} sets, in the
following sense.

\begin{definition}
A spherical pointset~$\Se$ is called {\em fine\/} if the following
three conditions are satisfied:
\begin{enumerate}
\item
\label{coll2}
no triple of~$\Se$ is collinear;
\item
\label{concur2}
no triple of great circles~$pp'$, where~$p,p' \in \Se$, is concurrent
other than at points of~$\Se$ (as well as their antipodal points);
\item
\label{ample}
every open hemisphere contains at least~$3$ of the points of~$\Se$.
\end{enumerate}
\end{definition}

Note that the non-collinearity implies, in particular, that~$\Se$
contains no pair of antipodal points.  Meanwhile, condition
\eqref{ample} implies that at every point of the sphere, there is a
Y-like set of tangent directions leading to points of~$\Se$.

To see that fine sets exist, start with the set of~$4$ vertices of the
regular inscribed tetrahedron.  This gives a set with at least one
point in every open hemisphere.  We replace each point of the
tetrahedron by a generic triple of nearby points.  The
non-collinearity and non-concurrency follow from genericity, and
property \eqref{ample} follows by construction.

\begin{definition}
Given a fine pointset~$\Se$, choose~$\epsilon(\Se)>0$ such that:
\begin{enumerate}
\item
the closed~$\epsilon(\Se)$ balls centered at the points of~$\Se \cup
-\Se$ are disjoint;
\item
there are at least 3 points of~$\Se$ in~$B(p,\pi/2-\epsilon(\Se))$ for
every~$p\in S^2$.
\end{enumerate}
\end{definition}

We note that property \eqref{ample} of fineness along with standard
compactness arguments shows that such a positive~$\epsilon(\Se)$
exists.

\begin{lemma}
\label{thereistau}
Let~$\Se$ be a fine set and choose~$\epsilon(\Se)$ as above.
Let~$\Gv$ the set of unit vectors in~$US^2$ tangent to geodesic
segments~$\tau$ of length~$\pi$ satisfying the following two
conditions:
\begin{itemize}
\item
$\tau(0,\pi)\cap -\Se=\emptyset$;
\item
$\tau (\epsilon(\Se),\pi- \epsilon(\Se))\cap \Se \not= \emptyset$.
\end{itemize}
Then~$\Gv$ is {\em Y-like}.
\end{lemma}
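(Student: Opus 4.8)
The plan is to fix an arbitrary point $p\in S^2$ and a unit vector $v\in U_pS^2$, and to produce a Y-like triple of unit vectors at $p$ lying in the set $\Gv$. The natural candidate vectors are those pointing from $p$ toward the $\geq 3$ points of $\Se$ guaranteed to lie inside the ball $B(p,\pi/2-\epsilon(\Se))$ by the choice of $\epsilon(\Se)$. So first I would set $q_1,q_2,q_3\in \Se\cap B(p,\pi/2-\epsilon(\Se))$, let $\gamma_i$ be the minimizing geodesic from $p$ through $q_i$ extended to length exactly $\pi$, and let $u_i=\gamma_i'(0)$. Since $d_0(p,q_i)<\pi/2-\epsilon(\Se)$, the point $q_i$ lies on the open subarc $\gamma_i(\epsilon(\Se),\pi-\epsilon(\Se))$, so the second bullet of Lemma~\ref{thereistau} is automatically satisfied for each $\gamma_i$.

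The second step is to arrange the first bullet, $\tau(0,\pi)\cap -\Se=\emptyset$, possibly by perturbing the $q_i$ slightly. The open great half-circle from $p$ in direction $u_i$ either avoids $-\Se$ already, in which case we keep it; or it passes through some antipodal point $-q$ with $q\in\Se$. This is where I would use conditions \eqref{coll2} and \eqref{concur2} of fineness, together with the fact that $\Se$ has no antipodal pairs: a generic small rotation of $u_i$ within $U_pS^2$ moves the half-circle off all the finitely many bad points $-\Se$ while keeping it pointed into $B(p,\pi/2-\epsilon(\Se))$ toward some (possibly different) point of $\Se$ on the middle subarc — here one uses that $\Se\cap B(p,\pi/2-\epsilon(\Se))$ is an open condition and that near each original $q_i$ there are in fact whole neighborhoods' worth of points of $\Se$ is \emph{not} available, so instead I would observe that we only need \emph{three} vectors in $\Gv$ forming a Y-like triple, and we have freedom to choose them among a two-parameter family (directions into the open ball). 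The key point: the set of directions $u\in U_pS^2$ whose half-circle hits $-\Se$ is finite, hence removable, and the set whose half-circle's middle portion meets $\Se$ contains an open set around each $u_i$ (since $q_i$ is interior to the middle subarc, small perturbations of $u$ still send the geodesic near $q_i$, and — this is the subtle part — we need the perturbed geodesic to actually \emph{hit} a point of $\Se$, which forces us to keep $u$ in the discrete set of directions pointing exactly at points of $\Se$).

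The right way to reconcile this, and the third step of the proof, is therefore: the directions pointing exactly at $q\in\Se\cap B(p,\pi/2-\epsilon(\Se))$ form a finite set $V_p$; each such direction satisfies bullet two; bullet one fails for $u\in V_p$ only if the half-circle through $q$ also passes through some $-q'$, i.e. $p$, $q$, $q'$ are such that $p\in$ great circle $q(-q')$, equivalently $-p\in$ great circle $\overline{qq'}$ — a codimension-one condition on $p$. Thus for $p$ outside a finite union of great circles, \emph{every} $u\in V_p$ lies in $\Gv$, and by property \eqref{ample} of fineness $V_p$ contains a Y-like triple. For $p$ on one of these exceptional great circles one must argue by a limiting/continuity argument from nearby good $p$, or directly check that there are still enough good directions; I expect \textbf{this exceptional-locus case to be the main obstacle}, and I would handle it by noting that the Y-like condition is closed and that at such $p$ one can still find three non-collinear points of $\Se$ in the open ball whose connecting great circles avoid $-\Se$, using that only finitely many of the $\binom{|\Se\cap B|}{2}$ great circles can be bad. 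Once a Y-like triple $\{u_1,u_2,u_3\}\subset\Gv\cap U_pS^2$ is produced for every $p$, the set $\Gv$ is Y-like by definition, completing the proof.
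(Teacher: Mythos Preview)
Your central claim---that the set $V_p$ of directions at $p$ toward points of $\Se\cap B(p,\pi/2-\epsilon(\Se))$ is Y-like---is not justified and is in fact false for the fine sets the paper constructs. Take $\Se$ to be the paper's example: three generic points near each vertex of a regular inscribed tetrahedron. For $p$ close to one tetrahedral vertex, the only points of $\Se$ within distance $\pi/2-\epsilon(\Se)$ of $p$ are the three points of that single cluster (the other clusters lie at distance $\arccos(-1/3)>\pi/2$), and the directions toward them are all nearly equal. Such a $V_p$ lies in a small arc, so no Y-like triple exists in it. Property~\eqref{ample} of fineness guarantees that directions toward \emph{all} of $\Se$ form a Y-like set at $p$, but not directions toward $\Se\cap B(p,\pi/2-\epsilon(\Se))$. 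There is a second, smaller gap: from $d_0(p,q_i)<\pi/2-\epsilon(\Se)$ you deduce $q_i\in\gamma_i(\epsilon(\Se),\pi-\epsilon(\Se))$, but you have only checked the upper endpoint---nothing rules out $d_0(p,q_i)<\epsilon(\Se)$.

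The paper's argument avoids both problems by using the semicircle characterization of Y-like directly: given $(p,v)$, it looks for a single $w\in\Gv$ at $p$ making an acute angle with $v$. Letting $m$ be the midpoint of the half-circle $\gamma$ in direction $v$, property~(2) of $\epsilon(\Se)$ applied at $m$ (not at $p$) yields three points $p_1,p_2,p_3\in\Se\cap B(m,\pi/2-\epsilon(\Se))$; the corresponding half-circles $\tau_i$ from $p$ to $-p$ through $p_i$ automatically make acute angles with $v$ and automatically have $p_i$ at distance $>\epsilon(\Se)$ from both endpoints. The remaining work is to show at least one $\tau_i$ misses $-\Se$ in its interior, and here the non-concurrency condition~\eqref{concur2} (which your proposal never invokes) is essential: if all three $\tau_i$ met $-\Se$, the point $p$ would lie on three great circles $p_ip_{i+3}$ with $p_j\in\Se$, contradicting either~\eqref{coll2} or~\eqref{concur2}. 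Your ``exceptional locus'' discussion gestures at a genericity argument but cannot substitute for this combinatorial step, since the case $p\in\Se\cup-\Se$ (which is exactly where your limiting argument would be needed) is handled in the paper by the same non-concurrency reasoning, not by continuity.
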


\begin{proof}
Fix a unit vector~$v$ at~$p\in S^2$ and let~$\gamma$ be the
corresponding geodesic segment of length~$\pi$.  We need to find
a~$w\in \Gv$ at~$p$ making an acute angle with~$v$.  Let~$H$ be the
(closed) hemisphere obtained as the union of the~$\tau\in
S\ppi_{\gamma}$.  Then by assumption there are at least three points
of~$\Se$ (call them~$p_{1}$,~$p_{2}$, and~$p_{3}$) in the interior
of~$H$ and at a distance greater than~$\epsilon(\Se)$ from the
boundary of~$H$ hence the endpoints of~$\gamma$.  Hence there are at
least 3 geodesic segment ~$\tau_1$,~$\tau_2$, and~$\tau_3$ in the
interior of~$S\ppi_{\gamma}$ passing through the points~$p_{i}, i=1,
2, 3$.  If two of these paths coincide (say~$\tau_1=\tau_2$)
then~$\tau_1$ passes through~$p_{1}$ and~$p_{2}$ so the initial
point~$p$ of~$\gamma$ is not in~$\Se$ and~$\tau_1$ avoids~$-\Se$ (by
condition 1 of being fine). If all of these paths are pairwise
distinct and also pass through points of~$-\Se$
(say~$-p_{4}$,$-p_{5}$, and~$-p_{6}$ respectively) then the initial
point~$p$ of~$\gamma$ would lie on the 3 great circles~$p_1p_4$,
$p_2p_5$,~$p_3p_6$ which contradicts either condition 2 (if~$p\notin
\Se \cup -\Se$) or condition 1 (if~$p\in \Se\cup -\Se$).  Thus we see
that there is a~$\tau$ in the interior of~$S\ppi_{\gamma}$ containing
an element of~$\Se$ at least~$\epsilon(\Se)$ from the endpoints, and
no element of~$-\Se$ in its interior.  The tangent vector of~$\tau$ is
the~$w$ we seek.
\end{proof}

\section{Existence of \good functions}
\label{five}

The goal of this section is to prove the following proposition.

\begin{proposition}
\label{existgood}
There exist \good functions.
\end{proposition}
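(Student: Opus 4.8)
The plan is to build an \good function~$f$ out of a fine set~$\Se$ by taking~$f$ to be a sum of small "bumps" supported near the points of~$\Se$, made odd by antisymmetrizing. Concretely, for each~$p_i\in\Se$ pick a smooth nonnegative function~$\phi_i$ supported in the ball~$B(p_i,\epsilon(\Se))$ and positive at~$p_i$, and set
\[
f=\sum_i\bigl(\phi_i\circ a-\phi_i\bigr),
\]
so that~$f\in\odd$, $f$ is supported in the~$\epsilon(\Se)$-neighborhood of~$\Se\cup -\Se$, $f<0$ near each~$p_i$, and~$f>0$ near each~$-p_i$. The point of working with a fine set is that condition~\eqref{ample} (in its quantitative~$\epsilon(\Se)$ form) guarantees enough points of~$\Se$ strictly inside every open hemisphere, and conditions~\eqref{coll2},~\eqref{concur2} were exactly what made Lemma~\ref{thereistau} go through.

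The key step is then to check the \good condition: the set of unit tangent directions to great half-circles~$\tau$ with~$\int_\tau f\,ds_0<0$ is Y-like. Here I would invoke Lemma~\ref{thereistau}. That lemma produces, for every~$(p,v)$, a half-circle~$\tau$ through~$p$ making an acute angle with~$v$ whose interior contains a point of~$\Se$ at distance~$\geq\epsilon(\Se)$ from the endpoints and meets no point of~$-\Se$ in its interior. For such a~$\tau$, the integral~$\int_\tau f\,ds_0$ picks up a \emph{strictly negative} contribution from the~$\phi_i$-bump sitting near that point of~$\Se$ (the bump being entirely traversed since the point is~$\geq\epsilon(\Se)$ from the endpoints), and it picks up \emph{no positive} contribution from any~$-\phi_i$-bump, since~$\tau$ avoids~$-\Se$ in its interior and the~$\phi_i\circ a$ bumps are supported within~$\epsilon(\Se)$ of~$-\Se$. (One small point: the endpoints of~$\tau$ lie outside~$\Se\cup -\Se$ and hence outside the support of every bump, by the first property of~$\epsilon(\Se)$; and a bump near~$-p_j$ for~$p_j$ on the \emph{boundary} hemisphere circle contributes zero because its center is at distance~$\geq\epsilon(\Se)$ from the interior of~$\tau$... this needs the balls around~$\Se\cup -\Se$ to be disjoint and contained in the stated neighborhoods, which is exactly the first defining property of~$\epsilon(\Se)$.) Hence~$\int_\tau f\,ds_0<0$, so this~$\tau$ witnesses Y-likeness at every~$p$ in the direction of every~$v$, and therefore~$f$ is \good.

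The main obstacle I anticipate is not the existence of the bumps but the bookkeeping of signs and supports: one must be sure that along a half-circle~$\tau$ furnished by Lemma~\ref{thereistau} the negative bumps genuinely outweigh (indeed, are not cancelled by) any positive ones. The clean way around this is to choose the supports small enough (shrink the~$\phi_i$, keeping their supports inside~$B(p_i,\epsilon(\Se))$) that a half-circle's interior can intersect the support of~$\phi_i\circ a$ only if it comes within~$\epsilon(\Se)$ of~$-p_i$; since Lemma~\ref{thereistau} gives~$\tau$ with~$\tau(0,\pi)\cap -\Se=\emptyset$, and by compactness "avoids~$-\Se$" upgrades to "stays a definite distance from~$-\Se$ on the relevant subarc," we can guarantee~$\tau$ misses all positive bumps. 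Thus the only remaining routine checks are: $f$ is smooth (finite sum of smooth bumps) and odd ($f\circ a=-f$ by construction), and a point~$p_i\in\Se$ lying at distance~$\geq\epsilon(\Se)$ from the endpoints of~$\tau$ means~$\tau$ traverses the \emph{whole} bump~$\phi_i$, so~$\int_\tau\phi_i\,ds_0>0$ and hence~$\int_\tau f\,ds_0\leq-\int_\tau\phi_i\,ds_0<0$. Assembling these observations gives the proposition, and with it, combined with Theorem~\ref{14}, the promised counterexample.
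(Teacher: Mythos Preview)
Your construction and overall strategy match the paper's: odd bump functions centered on a fine set~$\Se$, negativity witnessed via Lemma~\ref{thereistau}. But there is a genuine gap at basepoints lying in~$\Se\cup -\Se$. Your parenthetical asserts that ``the endpoints of~$\tau$ lie outside~$\Se\cup -\Se$,'' and your compactness upgrade of ``$\tau(0,\pi)\cap -\Se=\emptyset$'' to ``$\tau$ stays a definite distance from~$-\Se$'' depends on this. Neither holds: the Y-like condition must be verified at \emph{every}~$p\in S^2$, and Lemma~\ref{thereistau} says nothing about~$\tau(0)=p$ or~$\tau(\pi)=-p$ avoiding~$\Se\cup -\Se$. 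When~$p=-p_j\in -\Se$, every half-circle~$\tau$ from~$p$ begins at the center of the positive bump~$\phi_j\circ a$, so~$\tau$ unavoidably picks up a positive contribution, and no uniform shrinking of supports removes it. Your inequality~$\int_\tau f\,ds_0\leq -\int_\tau\phi_i\,ds_0$ is therefore unjustified at such~$p$.

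The paper repairs exactly this case with an extra idea. It takes all bumps to be identical radially symmetric profiles~$\delta^\epsilon_q$, so that the integral of a bump along any chord of its supporting ball is strictly less than along a diameter. At a basepoint~$p_0\in -\Se$ it then uses \emph{two} half-circles~$\tau_1,\tau_2$ through two distinct points~$p_1,p_2\in\Se$ (available by fineness); both cannot simultaneously be diameters of~$B(p_0,\epsilon)$, since that would force~$p_0,p_1,p_2$ to be collinear, contradicting condition~\eqref{coll2}. Hence one of them, say~$\tau_1$, crosses~$B(p_0,\epsilon)$ along a non-diameter chord, while it crosses~$B(p_1,\epsilon)$ along a diameter; the chord-versus-diameter lemma then gives~$\int_{\tau_1} f<0$. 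Your argument needs this (or an equivalent mechanism) to close.
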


We will costruct such functions by defining odd functions that
approximate the sum of~$\pm\delta$ (Dirac delta) functions centered at
points of~$-\Se$ and~$\Se$ for a fine set~$\Se$.  For our
approximate~$\delta$ functions we take for each~$p\in S^2$ the smooth
function~$\delta^\epsilon_p$ with support included in the
ball~$B(p,\epsilon)$ with
\[
\delta_p^\epsilon(q)= \exp(1/\epsilon)\cdot\exp
\left(\frac{1}{d(p,q)-\epsilon}\right)
\]
for~$q \in B(p,\epsilon)$.

We will use the following (nearly obvious) lemma.

\begin{lemma}
if~$\gamma$ is a diameter of~$B(p,\epsilon)$ (i.e. geodesic through
the center of length~$2\epsilon$) and~$\tau$ is any geodesic segment
in~$B(p,\epsilon)$ then~$\int_\tau \delta^\epsilon_p\leq \int_\gamma
\delta^\epsilon_p$ with equality holding if and only if~$\tau$ is also
a diameter.
\end{lemma}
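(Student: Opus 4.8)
The plan is to use the \emph{layer-cake} (level-set) decomposition of the radial bump $\delta^\epsilon_p$ in order to convert the claimed one-dimensional integral inequality into the elementary geometric fact that a geodesic arc lying inside a metric ball $B(p,\rho)$ has $g_0$-length at most $2\rho$, with equality exactly when the arc is a diameter of $B(p,\rho)$.

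Concretely, I would first record that $\delta^\epsilon_p(q)=\phi(d(p,q))$ with $\phi(r)=\exp(1/\epsilon)\exp\bigl(1/(r-\epsilon)\bigr)$ for $0\le r<\epsilon$ and $\phi(r)=0$ for $r\ge\epsilon$; here $\phi$ is smooth with $\phi(r)\to 0$ as $r\to\epsilon^-$, and $\psi:=-\phi'$ is nonnegative and is strictly positive on $(0,\epsilon)$. From $\phi(r)=\int_r^\epsilon\psi(\rho)\,d\rho$ one gets $\phi(r)=\int_0^\epsilon\psi(\rho)\,\mathbf 1_{\{r<\rho\}}\,d\rho$ for every $r\ge 0$. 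Substituting $r=d(p,\tau(s))$, integrating $ds_0$ along a geodesic segment $\tau\subset B(p,\epsilon)$, and interchanging the order of integration (Tonelli's theorem; all integrands are nonnegative) then yields
\[
\int_\tau\delta^\epsilon_p\,ds_0=\int_0^\epsilon\psi(\rho)\,L_0\bigl(\tau\cap B(p,\rho)\bigr)\,d\rho .
\]
The same formula applied to the diameter $\gamma$ gives $\int_\gamma\delta^\epsilon_p\,ds_0=\int_0^\epsilon 2\rho\,\psi(\rho)\,d\rho$, since $L_0(\gamma\cap B(p,\rho))=2\rho$ for all $\rho\le\epsilon$.

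It then remains to prove the pointwise bound $L_0(\tau\cap B(p,\rho))\le 2\rho$. I would argue that $\tau\cap B(p,\rho)$ is a single geodesic subarc (the distance to $p$ is unimodal along a great circle, so its sublevel sets are intervals), and that this subarc is minimizing, its length being below $2\rho<\pi$; hence its length equals the distance between its endpoints $x,y$, and the triangle inequality gives $d(x,y)\le d(x,p)+d(p,y)\le 2\rho$, with equality only when $p$ lies on the arc and both endpoints are at distance $\rho$ from $p$, i.e. the arc is a diameter of $B(p,\rho)$. Combining this bound with the two displayed identities gives $\int_\tau\delta^\epsilon_p\le\int_\gamma\delta^\epsilon_p$. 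For the equality statement, since $\psi>0$ on $(0,\epsilon)$, equality forces $L_0(\tau\cap B(p,\rho))=2\rho$ for a.e.\ $\rho$, hence for all $\rho\in(0,\epsilon)$ by continuity of $\rho\mapsto L_0(\tau\cap B(p,\rho))$; each such intersection is then a diameter of $B(p,\rho)$, so $p\in\tau$ and $\tau\cap B(p,\rho)$ is the length-$2\rho$ subarc of $\tau$ centered at $p$. Letting $\rho\to\epsilon^-$ exhausts $\tau$, forcing $\tau$ to be the diameter through $p$.

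As the authors indicate, this statement is ``nearly obvious,'' so I do not anticipate a serious obstacle; the only points demanding a little care are (i) verifying that $\tau\cap B(p,\rho)$ really is a minimizing geodesic subarc, which is where smallness of $\epsilon$ (concretely $2\epsilon<\pi$) enters, and (ii) propagating the geometric equality case through the layer-cake integral in the last step.
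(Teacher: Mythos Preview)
Your proof is correct. The paper takes a more direct route: after extending $\tau$ (harmless since $\delta^\epsilon_p\ge 0$) to a full chord of $B(p,\epsilon)$ of length $2l<2\epsilon$, it parametrizes $\tau$ by arclength from a boundary endpoint and invokes the triangle inequality to get $d(p,\tau(t))\ge \epsilon-t=d(p,\gamma(t))$ for $0\le t\le l$. Since $\phi$ is decreasing this yields the pointwise bound $\delta^\epsilon_p(\tau(t))\le\delta^\epsilon_p(\gamma(t))$, and integrating (using that $t\mapsto d(p,\tau(t))$ is symmetric about the midpoint $t=l$, as both endpoints lie on $\partial B(p,\epsilon)$) gives
\[
\int_\tau\delta^\epsilon_p=2\int_0^l\delta^\epsilon_p(\tau(t))\,dt\le 2\int_0^l\delta^\epsilon_p(\gamma(t))\,dt<2\int_0^\epsilon\delta^\epsilon_p(\gamma(t))\,dt=\int_\gamma\delta^\epsilon_p.
\]
Your layer-cake decomposition trades this pointwise comparison along the curve for a comparison across the level sets of $\phi$, reducing the problem to the chord-length estimate $L_0(\tau\cap B(p,\rho))\le 2\rho$. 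This is a little longer to set up, but it makes the equality case entirely transparent and works verbatim for any nonnegative radially nonincreasing profile, without appealing to the midpoint symmetry of the chord.
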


\begin{proof}
To see this (since~$\delta^\epsilon_p\geq 0$) we can
assume (by extending~$\tau$ if needed) that~$\tau$ runs from a
boundary point to a boundary point and has length~$2l<2\epsilon$.
Since for~$t\leq l$ we have~$d(p,\tau(t))\geq
\epsilon-t=d(p,\gamma(t))$ we have
\[
\int_\tau \delta^\epsilon_p=2\int_0^l \delta^\epsilon_p(\tau(t))dt\leq
2\int_0^l \delta^\epsilon_p(\gamma(t))<2\int_0^\epsilon
\delta^\epsilon_p(\gamma(t))=\int_\gamma \delta^\epsilon_p.
\]
\end{proof}

We are now ready to define our functions.

\begin{definition}
For~$\epsilon(\Se)>\epsilon >0$ set
\[
f^\epsilon_\Se =\sum_{p_i\in\Se} (\delta^\epsilon_{-p_i} -
\delta^\epsilon_{p_i})
\]
\end{definition}

Note that~$f^\epsilon_\Se$ is a smooth odd function.  We will now
prove that for sufficiently small~$\epsilon>0$ the
function~$f^\epsilon_\Se$ is \good\!.

\begin{lemma}
\label{ballestimat}
For every~$v\in US^2$ there is an~$\epsilon(v)$ with
$\epsilon(\Se)>\epsilon(v)>0$ and an open neighborhood~$U(v)$ of~$v$
in~$US^2$ (note that the base point also varies) such that for
all~$w\in U(v)$ there is a geodesic segment~$\tau$ of length~$\pi$
whose initial tangent vector makes an acute angle with~$w$ (hence it
starts at the base point of~$w$) and
\[
\int_\tau f^\epsilon_\Se<0
\]
for all~$\epsilon(v)>\epsilon>0$.
\end{lemma}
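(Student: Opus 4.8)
The plan is to produce $\tau$ by running a length-$\pi$ geodesic through a point of the fine set $\Se$, combining the Y-like property of $\Gv$ from Lemma~\ref{thereistau} with the diameter estimate just proved. Write $M(\epsilon)$ for the integral of $\delta^\epsilon_p$ over a diameter of $B(p,\epsilon)$ (a quantity independent of $p$ by homogeneity of the round sphere); the diameter lemma then says $0\le\int_\sigma\delta^\epsilon_p\le M(\epsilon)$ for every geodesic segment $\sigma$, with the upper value attained exactly when $\sigma\cap B(p,\epsilon)$ is a diameter, and with $\int_\sigma\delta^\epsilon_p=0$ as soon as $\epsilon<\mathrm{dist}(p,\sigma)$. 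Since $\delta^\epsilon_{-p_j}=\delta^\epsilon_{p_j}\circ a$ and $a$ is an isometry, a length-$\pi$ geodesic $\tau$ and its antipodal arc $a\tau$ together form a full great circle and
\[
\int_\tau f^\epsilon_\Se=\sum_{p_j\in\Se}\Big(\int_{a\tau}\delta^\epsilon_{p_j}-\int_\tau\delta^\epsilon_{p_j}\Big),
\]
so $\int_\tau f^\epsilon_\Se$ is negative precisely when $\tau$ is, on balance, closer to $\Se$ than the complementary arc $a\tau$ is.

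Given $v$ at the point $p$, I would apply Lemma~\ref{thereistau}: since $\Gv$ is Y-like there is $w_0\in\Gv$ at $p$ forming an acute angle with $v$, so the length-$\pi$ geodesic $\tau_0$ with initial vector $w_0$ runs through some $p_i\in\Se$ at distance $>\epsilon(\Se)$ from its endpoints and satisfies $\tau_0(0,\pi)\cap(-\Se)=\emptyset$. For $w$ near $v$, with base point $q$, let $\tau_w$ be the length-$\pi$ geodesic from $q$ through $p_i$. This depends continuously on $q$, its initial vector tends to $w_0$ as $q\to p$ (hence makes an acute angle with $w$ for $U(v)$ small), and $p_i$ lies on $\tau_w$ at distance $>\tfrac12\epsilon(\Se)$ from the endpoints and at distance $>\tfrac12\epsilon(\Se)$ from $a\tau_w$ as well. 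Thus for every $\epsilon<\tfrac12\epsilon(\Se)$ the term $p_i$ of the sum above equals $-M(\epsilon)$.

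If $p\notin\Se\cup(-\Se)$ this suffices: after shrinking $U(v)$ so that every base point $q$ and every antipode $a(q)$ avoids $\Se\cup(-\Se)$, the compact set $\tau_0([0,\pi])$ lies at some positive distance $\rho_0$ from $-\Se$, hence (by continuity in $q$, uniformly over the compact closure of the base points of $U(v)$) $\tau_w([0,\pi])$ stays at distance $>\tfrac12\rho_0$ from $-\Se$; taking $\epsilon(v):=\min\{\tfrac12\epsilon(\Se),\tfrac12\rho_0\}$ makes $\int_{a\tau_w}\delta^\epsilon_{p_j}=0$ for every $j$, so $\int_{\tau_w}f^\epsilon_\Se=-\sum_j\int_{\tau_w}\delta^\epsilon_{p_j}\le-M(\epsilon)<0$.

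The harder case is $p\in\Se\cup(-\Se)$; say $p\in\Se$ (the case $p\in-\Se$ is analogous, $-\Se$ being again a fine set with the same constant). Now the uniform distance bound fails, since as $q\to p$ the endpoint of $\tau_w$ tends to $a(p)\in-\Se$ and $\tau_w$ grazes $-\Se$ near that endpoint. The remedy is to exploit the oddness of $f^\epsilon_\Se$: one chooses $\tau_w$ — routing it through $p$ itself when the acute-angle condition permits, and through a point of $\Se$ lying deep in the hemisphere of $w$ otherwise — so that $\tau_w$ stays at least as close to $p$ as the complementary arc $a\tau_w$ does; then the pair of indices for $\{p,a(p)\}$ contributes $\int_{a\tau_w}\delta^\epsilon_p-\int_{\tau_w}\delta^\epsilon_p\le 0$, which together with the $-M(\epsilon)$ from $p_i$ (or another deep point of $\Se$) gives $\int_{\tau_w}f^\epsilon_\Se<0$. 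Verifying that such $\tau_w$ can be produced for every $w$ in one fixed neighbourhood of $v$, and that no further point of $-\Se$ injects enough positive mass to wreck the bound for every $\epsilon$ below a fixed threshold — here one uses the disjointness of the $\epsilon(\Se)$-balls about $\Se\cup(-\Se)$ together with the avoidance property built into $\Gv$ — is the main obstacle.
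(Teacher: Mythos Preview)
Your Case 1 (base point $p\notin\Se\cup(-\Se)$) is essentially the paper's argument: route the perturbed geodesic through the same point $p_i\in\Se$ that $\tau_0$ hits, and choose $\epsilon(v)$ small enough that all of $-\Se$ stays out of reach. This is fine.

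Case 2 is where your proposal has a genuine gap, and you acknowledge as much. Your plan of routing $\tau_w$ through $p$ itself does give a negative contribution from the index $p$, but that contribution is only $-2\int_0^{d(q,p)}\delta^\epsilon_p$, which tends to $0$ as $q\to p$; it does not dominate a possible positive contribution from some other $-p_j$ that $\tau_w$ might graze. The alternative branch---routing through a deep point $p_i\ne p$ ``so that $\tau_w$ stays at least as close to $p$ as the complementary arc $a\tau_w$''---is asserted but not proved, and in fact need not hold: whether $\tau_w\cap B(p,\epsilon)$ or $a\tau_w\cap B(p,\epsilon)$ carries more of the mass depends on which side of $q$ the foot of the perpendicular from $p$ to the great circle lies, and you have no control over this once the direction of $\tau_w$ is dictated by $p_i$.

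The paper closes this gap with a different and cleaner idea: rather than comparing $\tau_w$ with its complementary arc, it produces \emph{two} candidate half-circles $\bar\tau_1,\bar\tau_2$ through two distinct points $p_1,p_2\in\Se$ (both avoiding $-\Se\setminus\{p_0\}$ for $\epsilon$ small, by the same argument as in Case~1). The only possible positive mass along either $\bar\tau_i$ then comes from $B(p_0,\epsilon)$. Both $\bar\tau_1\cap B(p_0,\epsilon)$ and $\bar\tau_2\cap B(p_0,\epsilon)$ cannot be diameters, since that would force $p_0,p_1,p_2$ onto a common great circle, violating the non-collinearity built into a fine set. For the $\bar\tau_i$ whose intersection with $B(p_0,\epsilon)$ is \emph{not} a diameter, the diameter lemma gives a strict inequality: the positive contribution from $B(p_0,\epsilon)$ is strictly less than $M(\epsilon)$, while the negative contribution from $B(p_i,\epsilon)$ is exactly $-M(\epsilon)$. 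This yields $\int_{\bar\tau_i}f^\epsilon_\Se<0$ uniformly for $w$ in a neighbourhood of $v$ and all $0<\epsilon<\epsilon(v)$. The two-target trick, together with the non-collinearity axiom for fine sets, is the missing ingredient in your argument.
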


Note that the~$\epsilon(v)>0$ we find in the proof below will tend
to~$0$ as the base point of~$v$ tends to~$\Se$ (while not being
in~$\Se$).  This turns out not to be a problem since by the
compactness of~$US^2$ a finite number~$U(v_i)$ cover~$US^2$ and hence
we can take any~$\epsilon$ less than the smallest of
the~$\epsilon(v_i)$ and have that~$f^\epsilon_\Se$ is \good\!.  This
therefore proves Proposition~\ref{existgood}.

\begin{proof}[Proof of Lemma \ref{ballestimat}]
To prove the lemma, we consider two cases.  First assume that the base
point of~$v$ is not in~$\Se\cup -\Se$.  Let~$\tau$ (whose existence is
promised in Lemma \ref{thereistau}) be a geodesic segment of length
$\pi$ making an acute angle with~$v$ that misses~$-\Se$ and passes
through at least one~$p\in \Se$ that has distance greater than
$\epsilon(\Se)$ from its endpoints. Thus we can choose~$\epsilon(v)$
so small that~$\tau$ misses~$B(q,2\epsilon(v))$ for all~$q\in -\Se$.
Now for~$w$ in a small enough neighborhood~$U$ of~$v$, let~$\bar \tau$
be the geodesic segment of length~$\pi$ through the base point of~$w$
and~$p$.  For small enough~$U$,~$\bar \tau$ will still miss all
the~$B(q,\epsilon(v))$ for~$q\in -\Se$ while~$\bar \tau$ will make an
acute angle with~$w$.  Thus, for~$\epsilon(v)\geq \epsilon>0$, we
have~$f^\epsilon_\Se\leq 0$ along~$\bar \tau$ and is negative near~$p$
so we see~$\int_{\bar \tau} f^\epsilon_\Se < 0$.

In the second case the basepoint~$p_0$ of~$v$ is in~$\Se\cup -\Se$. We
will assume~$p_0\in -\Se$ since the other case is the same (by
reversing orientation of all geodesics).  Note that any~$\tau$ making
an acute angle with~$v$ which intersects~$\Se$ in its interior cannot
also intersect~$-\Se$ in its interior by property 1 of a fine set.  So
there are two (in fact three) geodesic segments~$\tau_1$ and~$\tau_2$
in the interior of~$S\ppi_{\gamma}$ that pass through a~$p_{1}$ and
a~$p_{2}$ respectively and no element of~$-\Se$ in its interior and
such that~$p_{1}$ and~$p_{2}$ have distance greater
than~$\epsilon(\Se)$ from~$p_0$ and~$-p_0$.  Now we
choose~$\epsilon(v)>0$ so small that for all~$q\in -\Se$ and~$q\not=
p_0$,~$B(q,2\epsilon(v))$ miss both~$\tau_1$ and~$\tau_2$.  Again for
$w$ in a small neighborhood~$U$ of~$v$ let ~$\bar \tau_1$ (resp~$\bar
\tau_2$) be the geodesic segment of length~$\pi$ starting at the
basepoint of~$w$ and passing through~$p_{1}$ (resp.~$p_{2})$.

If~$U$ is small enough we can assume both that~$\tau_i$ make acute
angles with~$w$ and that for~all~$q\in -\Se$ with~$q\not= p_0$,~$\bar
\tau_1$ and~$\bar \tau_2$ miss~$B(q,\epsilon(v))$.  Along~$\bar
\tau_1$ (resp.~$\bar \tau_2$) we have, for
$\epsilon(v)\geq\epsilon>0$,~$f^\epsilon_\Se\leq 0$ except on
\[
\bar \tau_1 \cap B(p_0,\epsilon)
\]
(respectively, on~$\bar \tau_2 \cap B(p_0,\epsilon)$).  Both~$\bar
\tau_1 \cap B(p_0,\epsilon)$ and~$\bar \tau_2 \cap B(p_0,\epsilon)$
cannot be diameters since that would put~$p_0$,~$p_{1}$ and~$p_{2}$ on
the same great circle (namely the one through~$p_0$ and the base point
of~$w$). So assume~$\bar \tau_1 \cap B(p_0,\epsilon)$ is not a
diameter.  Then since we know on the other hand that~$\bar \tau_1 \cap
B(p_{1},\epsilon)$ is a diameter, Lemma \ref{ballestimat} tells us
that~$\int_{\bar \tau_1} f^\epsilon_\Se <0$.
\end{proof}

\vfill\eject

\end{document}